\theoremstyle{plain}
\newtheorem{theorem}{Theorem}
\newtheorem{proposition}{Proposition}
\newtheorem{lemma}{Lemma}
\theoremstyle{definition}
\newtheorem{definition}{Definition}
\def\rank{\textrm{rank }}
\def\id{\textrm{id }}
\title{Probing the topology of the space of tokens with structured prompts}
\author{Michael Robinson$^1$, Sourya Dey$^2$, Taisa Kushner$^3$}
\date{$^1$Mathematics and Statistics, American University, Washington, DC, USA, michaelr@american.edu\\%
  $^2$Galois, Inc., Arlington, VA, USA, sourya@galois.com\\%
  $^3$Galois, Inc., taisa@galois.com
}
\begin{document}

\maketitle

\begin{abstract}
This article presents a general and flexible method for prompting a large language model (LLM)  to reveal its (hidden) token input embedding up to homeomorphism.  Moreover, this article provides strong theoretical justification---a mathematical proof for generic LLMs---for why this method should be expected to work.
With this method in hand, we demonstrate its effectiveness by recovering the token subspace of {\tt Llemma-7B}.  The results of this paper apply not only to LLMs but also to general nonlinear autoregressive processes.
\end{abstract}

\pagestyle{plain}

\section{Introduction}
\label{sec:introduction}

The set of tokens $T$, when embedded within the latent space $X$ of a large language model (LLM) can be thought of as a finite sample drawn from a distribution supported on a topological subspace of $X$.
One can ask what the smallest (in the sense of inclusion) subspace and simplest (in terms of fewest free parameters) distribution can account for such a sample.

Previous work \cite{robinson2024structure} suggests that the smallest topological subspace from which tokens can be drawn is not manifold, but has structure consistent with a stratified manifold.
That paper relied upon knowing the \emph{token input embedding function} $T \to X$,
which given each token $t \in T$, ascribes a representation in $X$.
Because embeddings preserve topological structure, in this paper, we will study $T$ by equating it with the image of the token input embedding function,
thereby treating $T$ both as the set of tokens and as a subspace of $X$.
This subspace is called the \emph{token subspace} of $X$.
Usually $X$ is taken to be Euclidean space $\mathbb{R}^n$,
so the token input embedding function is stored as a matrix,
whose rows correspond to tokens and whose columns correspond to coordinates within the latent space.
For instance, for the LLM {\tt Llemma-7B}, there are 32016 tokens that are embedded in $\mathbb{R}^{4096}$,
so consequently the token input embedding is a $32016 \times 4096$ matrix.

The token subspace has striking topological and geometric structure.  For instance, it has a definite local dimension near each token, different clusters of semantically related tokens, and curvature within subspaces that admit such a definition. 
It is natural to ask whether this structure has a measurable impact on the ``behavior'' of the LLM, namely its response to queries.
Moreover, a significant limitation of \cite{robinson2024structure} is that it relies upon direct knowledge of the token subspace via the token input embedding function.
The authors only considered open source models because the token input embedding function is distributed as part of the model.
This article shows that this limitation can be lifted, and in so doing also shows that the topology of the token subspace has a direct and measurable impact on LLM behavior.
Specifically, we show that an unknown token subspace can be recovered up to \emph{homeomorphism} (a kind of strong topological equivalence) by way of structured prompting of the LLM, without further access to its internal representations.

\subsection{Contributions}

This article presents a general and flexible method (Algorithm \ref{alg:token_prompting}) for prompting an LLM to reveal its (hidden) token subspace up to homeomorphism, and provides strong theoretical justification (Theorem \ref{thm:autoregressive_embedding}) for why this method should be expected to work.
With this method in hand, we demonstrate its effectiveness in Section \ref{sec:results} by recovering the token subspace of {\tt Llemma-7B}.
Recognizing that LLMs are a kind of generalized autoregressive process,
the proof of Theorem \ref{thm:autoregressive_embedding} applies to general \emph{nonlinear} autoregressive processes.

\subsection{Related work}

In \cite{jakubowski_2020}, it was hypothesized that the local topology of a word embedding could reflect semantic properties of the words, an idea that appears to be consistent with the data \cite{rathore2023topobert}.
Words with small local dimension (or those located near singularities) in the token subspace are expected to play linguistically significant roles.
A few papers \cite{Gromov_2024,tulchinskii2023intrinsicdimensionestimationrobust,jakubowski_2020} have derived local dimension from word embeddings.
Additionally, \cite{bradley2025magnitudecategoriestextsenriched,bradley2022enriched} shows that there is a generalized metric space in which distances between LLM outputs are determined by their probability distributions.
These papers do not address differences between possible embedding strategies, nor the fact that different LLMs will use different embeddings.
In short, while they acknowledge that topological properties can be estimated once the token subspace is found, they do not address how to find the subspace in the first place.

The transformer within an LLM makes it into a kind of dynamical system \cite{geshkovski_2023}.
A classic paper by Takens \cite{takens_1981} shows how to recover \emph{attractors}---behaviorally important subspaces---from dynamical systems.
Following the method discovered by Takens, a large literature grew around probing the internal structure of a dynamical system by building embeddings from candidate outputs, \cite{takens2006detecting,sauer1991embedology}.
While the usual understanding is that these results work for continuous time dynamical systems,
in which certain parametric choices are important \cite{xu2019twisty}, the underlying mathematical concept is \emph{transverality} \cite{Golubitsky_1973}.
Transversality is a generalization of the geometric notion of ``being in general position,'' for instance that two points in general position determine a line, or three points determine a plane.
Moreover, two submanifolds can intersect transversally, which means that along the intersection, their tangent spaces span the ambient (latent) space.
Transversality is useful because it yields strong conditions under which knowledge of several subspaces is sufficient to understand the entire ambient space.
In this paper, we apply transversality to obtain a new embedding result for general nonlinear autoregressive systems, of which LLMs are a special case.

\section{Preliminaries}
\label{sec:preliminaries}

An LLM is a kind of generalized autoregressive process, in which each timestep is a point in a fixed \emph{latent space} $X$.
Each timestep is built from the iterates of shifts $(\sigma f)$ of a given map $f : X^n \to X$ that predicts one point from a window of $n$ previous points.
In an LLM, the map $f$ is usually implemented using one or more transformer blocks, and the tokens are embedded as points in $X$.
Here, we only require that the transformer block be a smooth function, which is consistent with how they are implemented \cite{geshkovski_2023}.

\begin{definition}
  Suppose that $X$ is a smooth finite dimensional manifold of constant dimension, and that for some integer $n$, we have a smooth function $f: X^n \to X$.
  The \emph{shift} of $f$ is the function $(\sigma f) : X^n \to X^n$ given by
  \begin{equation*}
    (\sigma f)(x_1, x_2, \dotsc, x_n) := (x_2, \dotsc, x_n, f(x_1, x_2, \dotsc, x_n)).
  \end{equation*}
\end{definition}

Usually, the latent space is not made visible to the user of an LLM.
Instead, one can only obtain summary information about a point in $X$.
This could be as simple as the textual representation of a point in $X$ as a token (which is a categorical variable), but could be more detailed.
For instance, running the same query several times will yield an estimate of the probability of each token being produced.
In order to model the general setting, let us define a space $Y$ that represents the data that we can collect (say, a probability) about a token in $X$.
This measurement process is represented by a smooth function $g : X \to Y$, which---according to our Theorem \ref{thm:autoregressive_embedding}---can be chosen nearly arbitrarily.
In the context of LLMs, the function $g$ is often called the \emph{output embedding}\footnote{Caution: $g$ is rarely an ``embedding'' in the sense normally used by differential topologists.} function.

\begin{definition}
  \label{def:a_m_fg}
  The $k$-th iterate of a function $h: X\to X$, namely $k$ compositions of $h$ with itself, is written $h^{\circ k}$.  By convention $h^{\circ 0} := \id_X$, the identity function on $X$.

  Given a function $f: X^n \to X$, a function $g: X \to Y$, and a nonnegative integer $m$, the \emph{$m$-th autoregression of $f$ with measurements by $g$} is the function $\mathcal{A}_m(f,g): X^n \to Y^m$ given by
  \begin{equation*}
    \mathcal{A}_m(f,g) := \left(g \circ f,g \circ f \circ (\sigma f), g \circ f \circ (\sigma f) \circ (\sigma f), \dotsc, g \circ f \circ (\sigma f)^{\circ m-1}\right).
  \end{equation*}
  We will assume that $X$ and $Y$ are finite dimensional smooth manifolds and $f$ and $g$ are both smooth maps throughout the article.
\end{definition}

The function $\mathcal{A}_m$ represents the process of collecting data about the first $m$ tokens in the response of the LLM to a context window of length $n$.
The function $g$ represents the information we collect about a given token in the response.
Beware that in practice, since both $f$ and $g$ estimate probabilities from discrete samples, both are subject to sampling error.

While the token subspace $T$ is not generally a manifold \cite{robinson2024structure}, in practice it is always contained within a larger compact manifold,
which we will call the \emph{bounding manifold} $Z$.
Since the token subspace $T$ is not a manifold, $Z$ will generally \emph{not be equal} to the token subspace $T$.
If we obtain an embedding of $Z$, then the token subspace $T$ will also be embedded within the image of $Z$.

Our method (Algorithm \ref{alg:token_prompting}) requires that the context window of the LLM be ``cleared'' before each query to ensure that the hypotheses of Theorem \ref{thm:autoregressive_embedding} are met.
We formalize the operation of clearing the context window by considering the restriction $\mathcal{A}_m(f,g)$ to the subspace $\{x_1\} \times \dotsb \times \{x_{n-1}\} \times Z$.  Notice that this means that the first $n-1$ tokens of the initial context window are always the same (with no further constraints on exactly what values they take), while the last token in the context window is drawn from $Z$.
We write this restriction as $\mathcal{A}_m(f,g)|_{\{x_1\} \times \dotsb \times \{x_{n-1}\} \times Z} : Z \to Y^m$.

\section{Methods}
\label{sec:methods}

The main thrust of our approach is embodied by Algorithm \ref{alg:token_prompting}, which produces a set of Euclidean coordinates for each token.
Because Theorem \ref{thm:autoregressive_embedding} only yields a homeomorphism, not an isometry,
the coordinates we estimate will not be the same as those in the original embedding, nor will the distances between tokens be preserved.
Topological features, such as dimension, the presence of clusters, and (persistent) homology will nevertheless be preserved. 
Since checking whether two spaces are homeomorphic is extremely difficult \cite{ZIELINSKI2016635,Stillman_1989},
in Section \ref{sec:results}, we will only verify that dimension is preserved.
Other verifications remain as future work.

\begin{algorithm}
  \caption{Recovered token input embedding coordinates}
  \label{alg:token_prompting} 
  \begin{algorithmic}[1]
    \Require $\ell$ : number of probabilities to collect per token
    \Require $m$ : number of response tokens to collect
    \Require $x_1, \dotsc, x_{n-1}$ : fixed prefix for all queries
        
    \Procedure{TokenPromptEmbedding}{$x$; $\ell$, $m$} \Comment{$x$ is the token to embed}
    \State Clear the context window
    \State Build a query of the form $q:= (x_1, \dotsc, x_{n-1}, x)$
    \State Use the LLM to produce a response of length $m$ to the query $q$ \Comment{This computes $\mathcal{A}_m(f,\id)$}
    \State Repeat the previous three steps to estimate probability of most probable $\ell$ tokens in each of the $m$ positions \Comment{Applying the function $g$.}
    \State \Return The length $\ell m$ vector of probabilities as embedding coordinates \Comment{This is $\mathcal{A}_m(f,g)$ flattened into a single vector.}
    \EndProcedure
  \end{algorithmic}
\end{algorithm}

Figure \ref{fig:flowchart} summarizes the process.
Each token in the token set is taken as a query, and yields in response a sequence of tokens, each of which has an internal representation in $X$.
All we have access to are summary measurements of this internal representation, viewed through the function $g$.
In Algorithm \ref{alg:token_prompting}, we have chosen to define $g$ so that it estimates probabilities of tokens,
even though Theorem \ref{thm:autoregressive_embedding} is more general than that.
To that end, Steps 1-3 are repeated sufficiently many times to obtain a stable estimate of the probability of each of the $\ell$ tokens.
Therefore, each token yields a sequence of token measurements in $Y^m$, where $m$ is the number of response tokens we wish to collect.

\begin{figure}[!htbp]
  \begin{center}
    \includegraphics[width=5in]{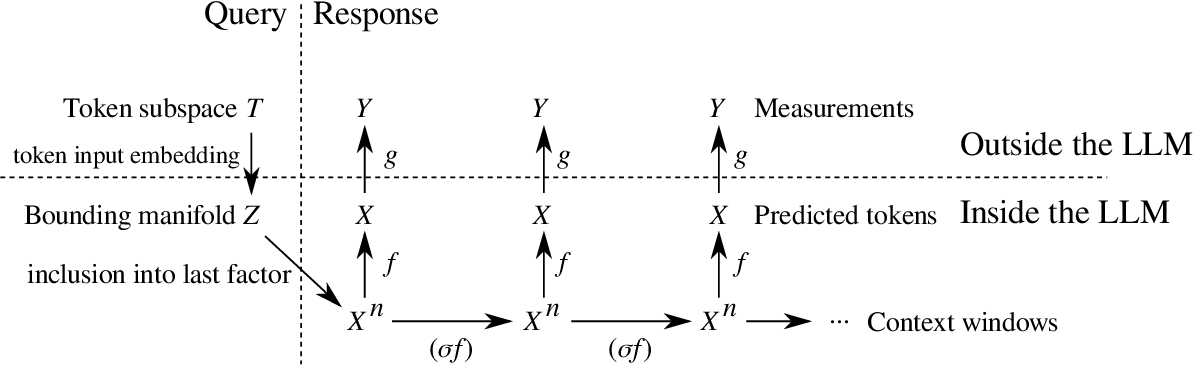}
    \caption{Flowchart of our Algorithm \ref{alg:token_prompting}: queries consist of individual tokens entering from the left of the frame, and result in a stream of measurements of tokens from the right.  Briefly, $f$ represents the action of the transformer blocks of the LLM, $(\sigma f)$ updates the context window between tokens, and $g$ is the output embedding, which produces probabilities for each of the tokens.}
    \label{fig:flowchart}
  \end{center}
\end{figure}

The correctness of Algorithm \ref{alg:token_prompting} is justified by Theorem \ref{thm:autoregressive_embedding},
which asserts that the process of collecting summary information about the sequence of tokens generated in response to single-token queries is an embedding provided certain bounds on the number of tokens collected are met.

\begin{theorem} (proven in the Appendix)
  \label{thm:autoregressive_embedding}
  Suppose that $X$ is a smooth manifold, $Y$ is a smooth manifold of dimension $\ell$, that $x_1, \dotsc x_{n-1}$ are elements of $X$, and $Z$ is a submanifold of $X$ of dimension $d$.
  For smooth functions $f : X^n \to X$ and $g: X \to Y$, the function $\mathcal{A}_m(f,g) : X^n \to Y^m$ given in Definition \ref{def:a_m_fg}, collects $m$ samples from iterates of $(\sigma f)$.

  If the dimensions of the above manifolds are chosen such that
  \begin{equation*}
    2d < m \min\{\ell,\dim X\} \le n \min\{\ell,\dim X\},
  \end{equation*}
  then there is a residual subset\footnote{A \emph{residual subset} is the intersection of countably many open and dense subsets.} $V$ of $C^\infty(X,Y)$
  such that if  $g \in V$, then there is a (different) residual subset $U$ of $C^\infty(X^n,X)$ such that if $f \in U$, then the function
  \begin{equation*}
    \mathcal{A}_m(f,g)|_{\{x_1\} \times \dotsb \times \{x_{n-1}\} \times Z} : Z \to Y^m
  \end{equation*}
  is a smooth injective immersion\footnote{An \emph{immersion} is a smooth function whose derivative (Jacobian matrix) is injective at all points.}.

  Following this, it is a standard fact (see \cite[Prop. 7.4]{Lee_2003}) that if $Z$ is compact, then $\mathcal{A}_m(f,g)$ is an embedding of $Z$ into $Y^m$.
\end{theorem}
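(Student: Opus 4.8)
The plan is to realize $\mathcal{A}_m(f,g)|_{\{x_1\}\times\dotsb\times\{x_{n-1}\}\times Z}$ as a composition and apply parametric transversality (the Thom transversality theorem, as used in the classical Takens embedding literature) twice in succession: once to control $g$, and once to control $f$. The key observation is that injectivity of a smooth map $Z\to Y^m$ is equivalent to the associated map $Z\times Z \to (Y^m)\times(Y^m)$ missing the diagonal off the diagonal, and being an immersion is a transversality condition on the derivative; both can be phrased as avoiding certain ``bad'' submanifolds in a suitable jet space. The dimension inequality $2d < m\min\{\ell,\dim X\}$ is exactly the codimension count that makes a generic map avoid the bad locus.

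First I would set up the double-evaluation map. Fix the prefix, so everything is a function of the last coordinate $z\in Z$. For the \emph{immersion} part: the derivative of $\mathcal{A}_m(f,g)|_Z$ at $z$ is built from the derivatives of $g$ composed with derivatives of the iterated shift $(\sigma f)^{\circ j}$. I would show that for $f$ in a residual set, the tuple of tangent maps $\big(D(g\circ f\circ(\sigma f)^{\circ j})\big)_{j=0}^{m-1}$ is injective on $T_zZ$; this follows because each new coordinate block contributes a generically independent $\min\{\ell,\dim X\}$-dimensional constraint, and after $m$ blocks the accumulated rank exceeds $d$ once $m\min\{\ell,\dim X\} > 2d \geq d$. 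For the \emph{injectivity} part: consider $\Phi: Z\times Z\setminus\Delta_Z \to Y^m\times Y^m$, $\Phi(z,z') = (\mathcal{A}_m(z),\mathcal{A}_m(z'))$, and ask that $\Phi$ be transverse to the diagonal $\Delta_{Y^m}$. Since $\dim(Z\times Z) = 2d$ and $\mathrm{codim}\,\Delta_{Y^m} = m\ell$, the hypothesis $2d < m\ell$ (note $\ell = \dim Y$, and $\min\{\ell,\dim X\}\le\ell$) forces transversality to mean the preimage is empty, i.e. $\mathcal{A}_m$ is injective.

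The technical engine is to verify, for each of these two conditions, a ``parametrized transversality'' lemma: the map $(f,z)\mapsto (\text{relevant jet of }\mathcal{A}_m\text{ at }z)$ or its double-point analogue is a submersion onto the jet fibre, so that Thom's theorem gives residuality of the set of good $f$ (and, in an outer layer, of good $g$). Submersivity here is where one genuinely uses that $f: X^n\to X$ has the full space $X$ as target and that the shift feeds the most recent output back in — perturbing $f$ at the relevant base points moves the iterates $(\sigma f)^{\circ j}(z)$ and their derivatives independently enough to hit any prescribed jet, provided those base points are distinct, which is arranged by first choosing $g$ generically and restricting to the off-diagonal. This is the step I expect to be the main obstacle: one must check that the successive iterate points $f(z), f(\sigma f)(z),\dotsc$ and, in the double-point argument, also those for $z'$, remain distinct for generic choices so that independent perturbations of $f$ are available at each — the usual delicate bookkeeping in Takens-type proofs, handled by an induction on $m$ or by an explicit jet-transversality argument on a stratified bad set, together with the condition $m \le n$ which guarantees the window is long enough to ``see'' all the iterate data.

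Finally, having produced the residual sets $V\subseteq C^\infty(X,Y)$ and $U\subseteq C^\infty(X^n,X)$ making $\mathcal{A}_m(f,g)|_Z$ an injective immersion, the last sentence is immediate: an injective immersion from a compact manifold is a topological embedding onto its image by \cite[Prop. 7.4]{Lee_2003}, since properness is automatic for continuous maps out of a compact space, and a proper injective immersion is an embedding. Hence $Z$ — and therefore the token subspace $T\subseteq Z$ — is embedded in $Y^m$, which is what Algorithm \ref{alg:token_prompting} reconstructs.
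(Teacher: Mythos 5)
Your proposal takes essentially the same route as the paper: Thom transversality and multijet transversality, handling immersion and injectivity separately, with the ordering ``first choose $g$ generically, then $f$'' that the paper also uses, and you correctly identify the double-point transversality formulation and the key technical difficulty (distinctness of iterate points so that independent perturbations of $f$ are available). The paper's exposition differs mainly in that it factors through the auxiliary map $\mathcal{A}_m(f,\id_X)$---proving in two lemmas that this is generically an immersion ($m\ge 2$) and then an injective immersion ($m\ge 3$) into $X^m$, before composing with $g$---whereas you treat $\mathcal{A}_m(f,g)$ directly. One small point to tighten: in your injectivity count you replace $m\min\{\ell,\dim X\}$ by $m\ell$, remarking that $\min\{\ell,\dim X\}\le\ell$. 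The arithmetic is fine, but the reason the paper insists on $\min\{\ell,\dim X\}$ is that when $\ell>\dim X$ the rank of $dg$ (and hence the freedom to perturb the relevant multijet) is capped at $\dim X$; the paper handles this by a ``without loss of generality $\dim Y\le\dim X$'' reduction, so the effective codimension governing the transversality argument really is $m\min\{\ell,\dim X\}$, not $m\ell$.
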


Since the token subspace $T$ is always compact, if we satisfy the inequalities on dimensions, most choices of $f$ (the LLM) and $g$ (the measurements we collect) will yield embeddings of $T \subseteq Z$ into $Y^m$ if the appropriate number of measurements is collected.
Furthermore, since embeddings induce homeomorphisms on their images, even if the token subspace is not a manifold, its image within $Y^m$ will be topologically unchanged.

The ordering of the residual subsets, that first $g$ is chosen and then $f$ may be chosen according to a constraint imposed by $g$, is necessary to make the proof of Theorem \ref{thm:autoregressive_embedding} work.  In essence, a poor choice of $g$ can preclude making \emph{any} useful measurements, regardless of $f$.
Although this is opposite to the way Algorithm \ref{alg:token_prompting} operates---the LLM (described by $f$) is selected without regard for the properties to be collected (described by $g$)---this difference is likely not terribly important given that a poor choice of $g$ is not likely to recommend itself to engineers in the first place.

\section{Results}
\label{sec:results}

To demonstrate our method, we chose to work with {\tt Llemma-7B} \cite{llemma}.
This model has $32016$ tokens in total, which are embedded in a latent space of dimension $\dim X = 4096$.
Since the source code and pre-trained weights for the model are available, the token input embedding is known.
We can therefore compare the token subspace of the model with the embedding computed by Algorithm \ref{alg:token_prompting}.
{\tt Llemma-7B} uses a context window of $n=4096$ tokens, since \cite{azerbayev2023llemma} says the model was trained on sequences of this length.

As per the results exhibited in \cite[Fig 7]{robinson2024structure}, the token subspace is a stratified manifold in which all strata are dimension $14$ or less.
Therefore, the manifold $Z$ that contains the token subspace can be taken to be not more than $d=28$ dimensional, and even though this is probably quite a loose bound.

Algorithm \ref{alg:token_prompting} requires the use of a function $g$ to collect $m$ samples from each response token.
We tried three different Options for the choice of $g$ and the number of tokens we collected:
\begin{description}
\item[Option (1):] Collect $m=30$ response tokens and $\ell = 3$ probabilities for the top three tokens at each response token position (ignoring what the tokens actually were),
\item[Option (2):] Collect $m=30$ response tokens and $\ell = 32016$ probabilities, one for each token, but aggregated over the entire response, and
\item[Option (3):] Collect $m=1$ response token and $\ell = 32016$ probabilities, one for each token being the first token in the response.
\end{description}

Option (1) satisfies the hypotheses of Theorem \ref{thm:autoregressive_embedding} because
\begin{equation*}
  2d = 56 \le m \ell = 30 \times 3 = 90  \le n \ell = 4096 \times 3 = 12288.
\end{equation*}

Option (2) also satisfies the hypotheses of Theorem \ref{thm:autoregressive_embedding} because
\begin{equation*}
  2d = 56 \le m \ell = 30 \times \min\{ 4096, 32016\}  \le n \ell = 4096 \times \min\{ 4096, 32016\}.
\end{equation*}

Option (3) also satisfies the hypotheses of Theorem \ref{thm:autoregressive_embedding} because
\begin{equation*}
  2d = 56 \le m \ell = 1 \times \min\{ 4096, 32016\}  \le n \ell = 4096 \times \min\{ 4096, 32016\}.
\end{equation*}

Given these experimental parameters, we used Algorithm \ref{alg:token_prompting} to compute coordinates for each of the 32016 tokens.  This process took 13 hours of walltime on a Macbook M3.
Afterwards, the method from \cite{robinson2024structure} was used to estimate the local dimension at each token; this took 3 hours of walltime on a Core i7-3820 running 3.60GHz.
The entire process, consisting of the pipeline of (1) selection of one of the Options, (2) Algorithm \ref{alg:token_prompting} using that Option for data collection, followed by (3) dimension estimation, will be referred to a ``proposed dimension estimator''.

Because the embedding produced by Algorithm \ref{alg:token_prompting} using Option (1) is into $\mathbb{R}^{90}$ while the others embed into substantially higher dimensional space, doing any postprocessing is much easier on Option (1) than the others.
As a consequence, we will mostly focus on the proposed dimension estimator with Option (1) after establishing that it is qualitatively similar to the others.

\begin{figure}[!htbp]
  \begin{center}
    \includegraphics[width=5in]{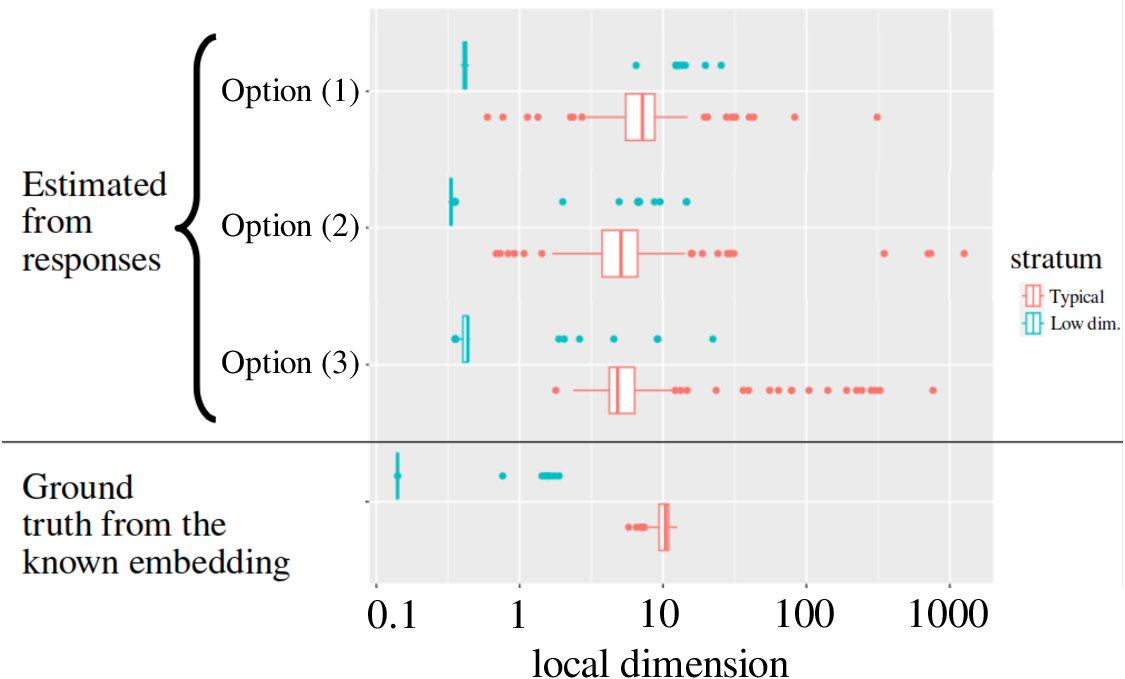}
    \caption{Comparison of estimated dimension on a stratified sample using each of the proposed dimension estimators and the dimension estimated directly from the embedding.  Note: the local dimension from the known embedding is the ``base'' dimension, not the fiber dimension; see Figure \ref{fig:sample_v_r}.}
    \label{fig:boxplots}
  \end{center}
\end{figure}

Because Options (2) and (3) are very computationally expensive, we drew a stratified sample of 200 random tokens to compare dimensions across all three Options and the original embedding.
We drew a simple random sample of 100 tokens from the distribution at large and another simple random sample of 100 tokens with known low dimension (below 5).
The reason for this particular choice is that \cite{robinson2024structure} establishes that a few tokens in {\tt Llemma-7B} have unusually low dimension (less than 5), and we would like to ensure that this is correctly captured.

Figure \ref{fig:boxplots} shows the estimated dimension for these two strata across all three Options against the true token input embedding.
It is clear that both strata are recovered by Algorithm \ref{alg:token_prompting} using all three Options,
but that there are biases present and the variance of the estimates is increased.
This is largely due to the fact that probability estimation is subject to sampling error, both in the $g$ function and the LLM itself.
Figure \ref{fig:boxplots} indicates that the differences between different Options are not large.
In short, the token input embedding has a strong impact upon the behavior of the LLM.

\begin{figure}[!htbp]
  \begin{center}
    \includegraphics[width=5in]{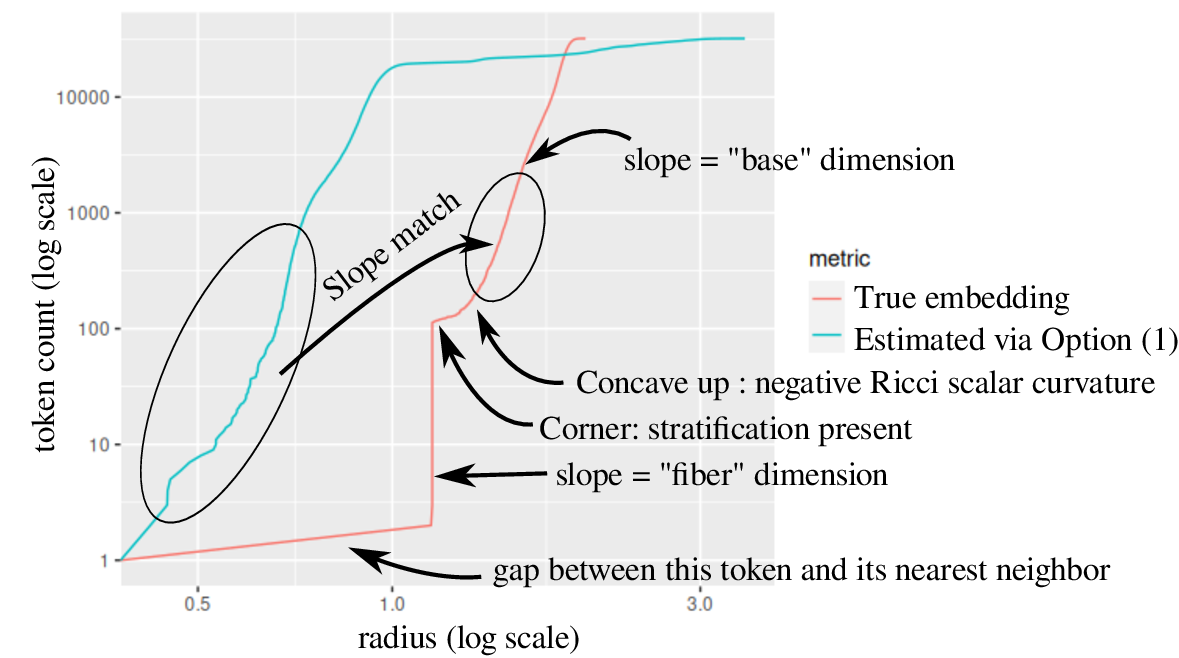}
    \caption{The log-log volume versus radius plot for the token ``{\tt \}}'' at start of a word obtained from the original embedding (red) and the proposed dimension estimator with Option (1) (blue).} % token_id=500
    \label{fig:sample_v_r}
  \end{center}
\end{figure}

Following the methodology of \cite{robinson2024structure}, we recognize that dimension estimates do not tell the full story.
We examined the volume (token count) versus radius for all of the tokens.  Figure \ref{fig:sample_v_r} shows one instance, which corresponds to the token ``{\tt \}}'' appearing at the start of a word.  Notice that there are ``corners'' present in the curve derived from the original embedding, which is indicative of stratifications in the token subspace.  In fact, most of the tokens in the token subspace for {\tt Llemma-7B} have \emph{two} salient dimensions: one for small radii and one for larger radii.  The stratification structure in the vicinity of this particular token is indicative of a negatively curved stratum (larger radii) that has been thickened by taking the cartesian product with a high dimensional sphere of definite radius (smaller radii).  In the case of Figure \ref{fig:sample_v_r}, the radius of this sphere is approximately $1.2$, as indicated by the vertical portion of the red curve.

The structure of two distinct dimensions is typical throughout the token subspace for {\tt Llemma-7B}, though the radius of the inner sphere tends to vary substantially.
As such, the token subspace seems to have the structure of a \emph{fiber bundle} over a stratified manifold at almost all tokens; it is locally homeomorphic to a cartesian product of a sphere and a stratified manifold except at a small number of tokens.  We will call the lower dimensional stratum---which occurs for larger radii---the \emph{base} stratum, and the higher dimensional spherical stratum---which occurs for smaller radii---the \emph{fiber} stratum in what follows.
One should use the intuition that the base stratum corresponds to the inherent semantic variability in the tokens, while the fiber stratum is mostly capturing model uncertainty, noise, and other effects.  Indeed, the definite radius of the sphere at each token appears to be characteristic of {\tt Llemma-7B}; other LLMs do not seem to exhibit this structure as clearly.

Notice that for the token shown in Figure \ref{fig:sample_v_r}, there is an apparent match in slope between the estimate provided by the proposed dimension estimator with Option (1) and the slope within the base portion of the original embedding.  This suggests that the base portion of the token subspace is most important in terms of the LLM's responses.  This accords with some of the intrinsic dimension estimates in the literature for natural language (see for instance \cite{Gromov_2024,tulchinskii2023intrinsicdimensionestimationrobust,jakubowski_2020}), since the base dimension tends to be in the vicinity of 5--10, whereas the fiber is much higher dimensional.  This intuition is confirmed in the analysis that follows.  The dimension estimated by the proposed dimension estimator across all three Options is more sensitive to the base dimension, and therefore can be understood to estimate the base space of the fiber bundle structure of the token subspace.

\begin{figure}[!htbp]
  \begin{center}
    \includegraphics[width=4in]{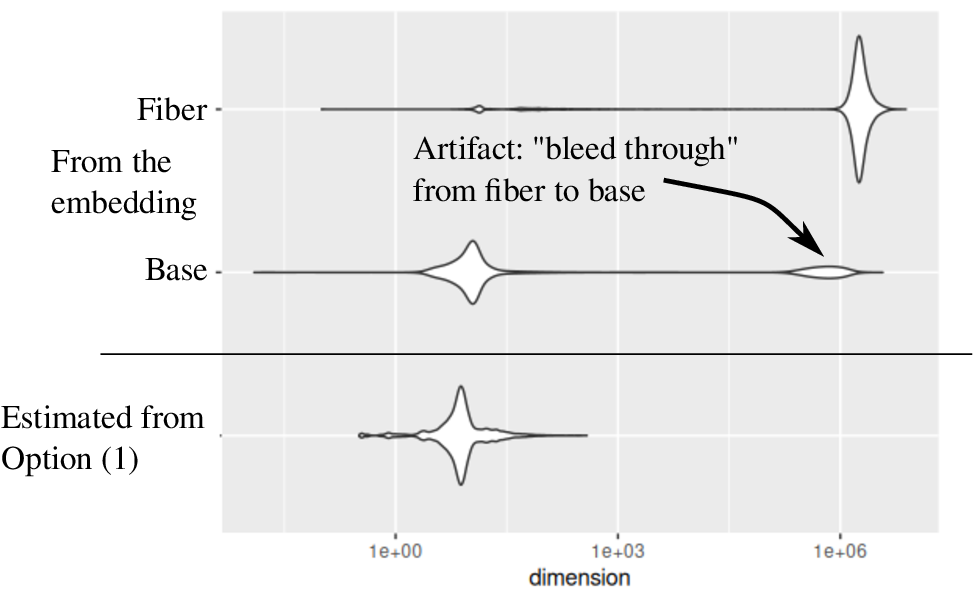}
    \caption{Histograms of the local dimensions estimated for all tokens.}
    \label{fig:b_f_e}
  \end{center}
\end{figure}

Figure \ref{fig:b_f_e} shows the distribution of local dimensions estimated for all tokens using the original embedding (base and fiber computed separately) and using the proposed dimension estimator with Option (1).
As is clear from Figure \ref{fig:sample_v_r}, stratifications are not clearly visible with the proposed dimension estimator, so each token is ascribed only one dimension.
Figure \ref{fig:b_f_e} shows that the estimated dimension aligns closely with the base dimension.
This agrees with the intuition that the fiber dimension mostly corresponds to noiselike components, which ultimately have less impact on the LLM responses.

\begin{figure}[!htbp]
  \begin{center}
    \includegraphics[width=5in]{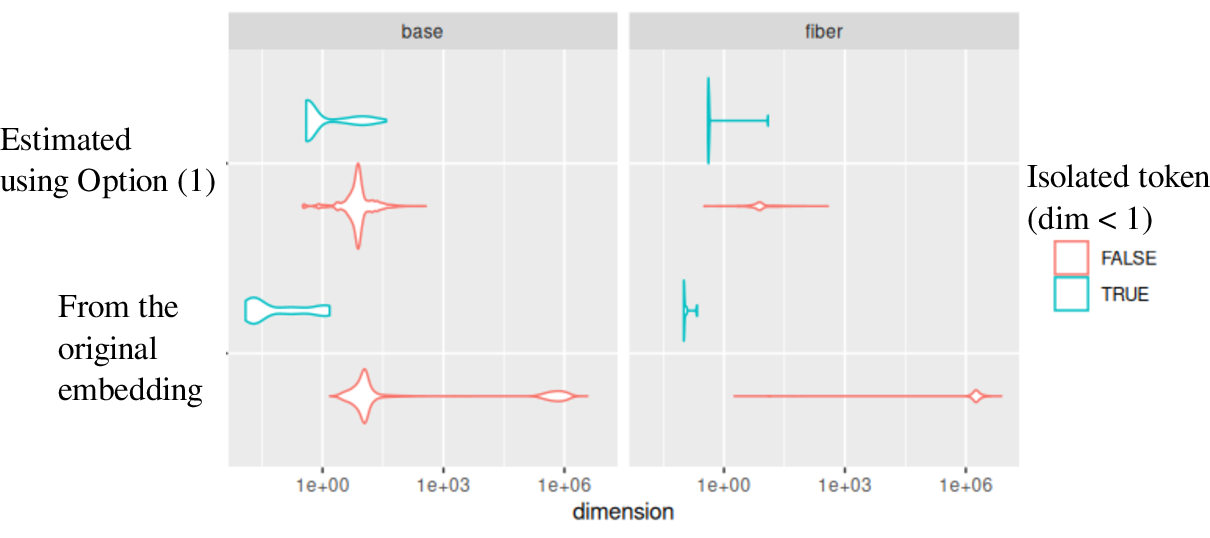}
    \caption{Comparison of estimated dimension using the proposed dimension estimator with Option (1) and the dimension estimated directly from the embedding}
    \label{fig:option_1_box}
  \end{center}
\end{figure}

There are a few tokens with very few neighbors under the token input embedding.  These \emph{isolated tokens} have local dimension less than $1$.  Figure \ref{fig:option_1_box} shows that the proposed dimension estimator with Option (1) reports significantly lower dimensions for the isolated tokens, both in the fiber and the base.
There is a bias towards the median dimension of approximately $10$ that is likely due to random sampling effects.

\section{Discussion}
\label{sec:discussion}

We have demonstrated that Algorithm \ref{alg:token_prompting} allows one to impute coordinates to tokens from LLM responses in a way that leads to an embedding of the latent space into the space of responses.
This establishes that the topology of an LLM's token subspace has a strong link to the LLM's behavior,
specifically tokens that are near each other result in similar responses.
As suggested by others \cite{robinson2024structure,jakubowski_2020} this may explain why LLMs are not performant against certain kinds of queries.

\begin{table}
  \caption{Sample responses to queries}
  \label{tab:sample_responses}
  \begin{tabular}{|c||l|}
    \hline
    Token & Response \\
    \hline
    \hline
        {\tt <unk>} & {\tt 1.1k views\textbackslash nConsider the following statements: \textbackslash n1. } \\ % token_id=0
        & {\tt The complement of every Turing decidable language is} \\ % token_id=0
        & {\tt Turing decid} \\ % token_id=0
        \hline
            {\tt <0xC5>} & {\tt  15:10, 11 September 2008 (UTC)\textbackslash n\#\#\# 2008}\\ % token_id=200
            \hline
                {\tt <0xC6>} & {\tt  10000000000000000000000000000}\\ % token_id=201
                \hline
                    {\tt ot} & {\tt ally, the same as the one in the previous section.\textbackslash n}\\ % token_id=327
                & {\tt \textbackslash begin\{figure\}[htbp]\textbackslash n\textbackslash centering\textbackslash n\textbackslash includegraphics[width}\\ % token_id=327
                       
                      \hline
    {\tt \}}& (string consisting entirely of spaces)\\ % token_id=500
    \hline
  \end{tabular}
\end{table}

Aside from the process of estimating the topology of the token subspace,
the raw output of Algorithm \ref{alg:token_prompting} is interesting on its own.
Table \ref{tab:sample_responses} shows just a few of the most intriguing results.
Each of the responses shown were produced from single token queries, yet 
some of the output of {\tt Llemma-7B} in response to a single token is apparently coherent.
The responses to semantically meaningless queries can even include (as in the case of the last entry of Table \ref{tab:sample_responses}) syntactically correct source code.
This suggests that the data collection part of Algorithm \ref{alg:token_prompting} may be finding interesting \emph{glitch tokens} \cite{li2024glitch,zhang2024glitchprober,wu2024mining,land2024fishing}.

\section*{Acknowledgements}

The authors would like to thank Andrew Lauziere for his helpful suggestions on improving a draft of this paper.

This article is based upon work supported by the Defense Advanced Research Projects Agency (DARPA).
Any opinions, findings and conclusions, or recommendations expressed in this material are those of the authors and do not necessarily reflect the views of DARPA.

\bibliographystyle{unsrt}
\bibliography{takenstokens_bib}

\section*{Appendix: embedding methods for autoregressive processes}
\label{sec:appendix}

Theorem \ref{thm:autoregressive_embedding} bounds the number of iterates to be gathered in order to recover the token subspace up to homeomorphism.
Interestingly, depending on the size of the context window $n$ and the dimension bound $d$ on the token subspace, it may be impossible to recover the token subspace.
However, since the trend is to use large context windows, recovery is usually possible.

Toward the goal of proving Theorem \ref{thm:autoregressive_embedding}, a preliminary fact is that bijectivity of the shift of $f$ depends only on the first coordinate of $f$.
\begin{proposition}
  \label{prop:bijective_shifts}
  The shift of $f$ is bijective if and only if for every $(x_2, \dotsc, x_n) \in X^{n-1}$, $f|_{X \times \{x_2\} \times \dotsb \times \{x_n\}}: X \to X$ is bijective.
\end{proposition}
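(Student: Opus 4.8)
The plan is to unwind the definition of the shift and exploit the fact that the first $n-1$ output coordinates of $(\sigma f)(x_1,\dots,x_n) = (x_2,\dots,x_n,f(x_1,\dots,x_n))$ are literally the last $n-1$ input coordinates. Consequently all of the freedom in both the domain and the codomain is concentrated in the first input slot and the last output slot, and these two are linked precisely by the restricted maps $f|_{X\times\{x_2\}\times\dots\times\{x_n\}}$. Since bijectivity of every such restriction is the same as the conjunction of injectivity of all of them and surjectivity of all of them, I would prove the proposition by treating injectivity and surjectivity of $(\sigma f)$ separately and then recombining.

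For the surjectivity half: given a target $(y_1,\dots,y_n)\in X^n$, any preimage $(x_1,\dots,x_n)$ under $(\sigma f)$ is forced to satisfy $x_{i+1}=y_i$ for $i=1,\dots,n-1$, leaving the single remaining equation $f(x_1,y_1,\dots,y_{n-1})=y_n$. Hence $(\sigma f)$ is surjective if and only if for every $(y_1,\dots,y_{n-1})\in X^{n-1}$ the map $x_1\mapsto f(x_1,y_1,\dots,y_{n-1})$ attains every value $y_n\in X$, i.e.\ if and only if each restriction $f|_{X\times\{y_1\}\times\dots\times\{y_{n-1}\}}$ is surjective. For the injectivity half: if $(\sigma f)(x_1,\dots,x_n)=(\sigma f)(x_1',\dots,x_n')$, then comparing the first $n-1$ output coordinates gives $x_i=x_i'$ for $i=2,\dots,n$, and comparing the last output coordinate gives $f(x_1,x_2,\dots,x_n)=f(x_1',x_2,\dots,x_n)$. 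Thus $(\sigma f)$ is injective if and only if $x_1\mapsto f(x_1,x_2,\dots,x_n)$ is injective for every $(x_2,\dots,x_n)\in X^{n-1}$, i.e.\ if and only if each restriction is injective. Conjoining the two equivalences yields the statement.

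I do not expect a genuine obstacle here; the proposition is a direct reindexing argument. The only point requiring care is keeping the universal quantifier over $(x_2,\dots,x_n)$ (equivalently over $(y_1,\dots,y_{n-1})$) in lockstep across the two halves, so that ``injective for all tuples and surjective for all tuples'' recombines exactly into ``bijective for all tuples.'' Smoothness of $f$ plays no role in this proposition and can be ignored; it matters only for the transversality arguments used later in the proof of Theorem \ref{thm:autoregressive_embedding}.
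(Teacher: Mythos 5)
Your proof is correct and follows essentially the same route as the paper's: split bijectivity into injectivity and surjectivity, observe that the first $n-1$ output coordinates of $(\sigma f)$ copy the last $n-1$ input coordinates, and reduce each half to the corresponding property of the restriction $f|_{X\times\{x_2\}\times\dotsb\times\{x_n\}}$. The paper's argument is a slightly terser version of exactly this reindexing.
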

\begin{proof}
  Suppose that $(\sigma f)(x_1, x_2, \dotsc, x_n) = (\sigma f)(y_1, y_2, \dotsc, y_n)$.  This necessarily establishes the equality of $x_2 = y_2$, $\dotsc$, $x_n = y_n$.
  This means that $f(x_1, x_2, \dotsc, x_n) = f(y_1, x_2, \dotsc, x_n)$.
  Clearly if $f$ is injective on its first coordinate, then $x_1 = y_1$.
  Conversely if $(\sigma f)$ is injective, then we must have that $f$ is injective on its first coordinate.

  Surjectivity is similar: requiring a particular solution to $(\sigma f)(x_1, \dotsc, x_n) = (z_1, \dotsc, z_n)$ means that we must have $x_2 = z_1$, $\dotsc$, $x_n = z_{n-1}$.
  Therefore $z_n = f(x_1, z_1, \dotsc, z_{n-1})$, which immediately requires surjectivity of $f$ when restricted to its first coordinate.
  Conversely, if $f$ is surjective when restricted to its first coordinate, surjectivity of $(\sigma f)$ follows.
\end{proof}

Although the shift of $f$ is well defined if $X$ is a smooth manifold, an important special case is when $X = \mathbb{R}^d$.
This will play a role in what follows because we will work locally, and the tangent spaces of a manifold are vector spaces.

\begin{lemma}
  \label{lem:sigma_f_block}
  If $X = \mathbb{R}^d$ and $f$ is a linear map, then $(\sigma f)$ has the block matrix representation
  \begin{equation*}
    (\sigma f) = S_n + F = \begin{pmatrix}
      0_{d \times d}      & \id_X  & 0_{d \times d}      & \dotsb & 0_{d \times d} \\
             & \ddots & \ddots & \ddots & \vdots \\
      \vdots &        & \ddots & \ddots & 0_{d \times d} \\
             &        &        & \ddots & \id_X \\
      0_{d \times d}      &        & \dotsb &        & 0_{d \times d} \\
    \end{pmatrix} +
    \begin{pmatrix} 0_{(n-1)d \times nd} & \\ \vdots \\ f \end{pmatrix}.
  \end{equation*}
\end{lemma}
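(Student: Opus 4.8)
The plan is to observe that when $X = \mathbb{R}^d$ and $f$ is linear, the map $(\sigma f)$ is itself linear, and then to read off its matrix block by block against the decomposition $X^n = \mathbb{R}^{nd} = (\mathbb{R}^d)^n$. First I would check linearity: the defining formula $(\sigma f)(x_1, \dotsc, x_n) = (x_2, \dotsc, x_n, f(x_1, \dotsc, x_n))$ expresses each of the $n$ output blocks as a linear function of $(x_1, \dotsc, x_n)$, since for $i = 1, \dotsc, n-1$ the $i$-th output block is the coordinate projection $(x_1,\dotsc,x_n) \mapsto x_{i+1}$, and the $n$-th output block is $f$, which is linear by hypothesis. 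Hence $(\sigma f)$ is linear and is represented by an $nd \times nd$ matrix, which I partition into an $n \times n$ grid of $d \times d$ blocks, the $(i,j)$ block recording the dependence of the $i$-th output block on $x_j$.

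Next I would compute these blocks directly. For $i = 1, \dotsc, n-1$ the $i$-th output block equals $x_{i+1}$, so the $(i, i+1)$ block is $\id_X$ and every other block in that block row is $0_{d \times d}$; assembling these block rows produces exactly the superdiagonal matrix $S_n$ in the statement. For $i = n$ the output block is $f(x_1, \dotsc, x_n)$, whose matrix is the $d \times nd$ matrix representing $f$; placing this in the last block row and zeros in the block rows above it gives $F$. Since the nonzero block rows of $S_n$ (the first $n-1$) and of $F$ (only the last) are disjoint, adding the two contributions yields $(\sigma f) = S_n + F$ exactly as displayed.

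The argument is essentially bookkeeping, so I do not expect a substantive obstacle; the only point needing care is fixing once and for all the convention for stacking the $x_i$ into a single column vector of $\mathbb{R}^{nd}$ and the matching indexing of block rows and columns, so that the identity blocks land on the superdiagonal (shifting the window forward by one) rather than on the subdiagonal. With that convention pinned down, the block computation above is immediate.
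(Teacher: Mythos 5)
Your proof is correct, and it is the natural one: observe that $(\sigma f)$ is linear because each output block is either a coordinate projection or the linear map $f$, then read off the $n \times n$ grid of $d \times d$ blocks row by row. The paper states Lemma~\ref{lem:sigma_f_block} without proof, treating it as immediate bookkeeping, which is exactly the content of your argument; your care about the block-indexing convention (superdiagonal versus subdiagonal) is the right thing to pin down and matches the displayed matrix.
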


By repeated application of the chain rule, we can compute the derivative of $\mathcal{A}_m(f,g)$,
\begin{equation*}
  \begin{aligned}
    \left(d \mathcal{A}_m(f,g)\right)(x) & = \begin{pmatrix} d_x(g\circ f) \\
      d_x\left(g \circ f \circ (\sigma f)\right) \\
      d_x\left(g \circ f \circ (\sigma f) \circ (\sigma f)\right) \\
      \vdots \\
      d_x\left(g \circ f \circ (\sigma f)^{\circ m-1} \right)\end{pmatrix}\\
    & = \begin{pmatrix} d_{f(x)}g \circ d_xf \\
      d_{(f \circ (\sigma f))(x)}g \circ d_{(\sigma f)(x)}f \circ d_x(\sigma f) \\
      d_{(f \circ (\sigma f) \circ (\sigma f))(x)}g \circ d_{((\sigma f) \circ (\sigma f))(x)}f \circ d_{(\sigma f)(x)}(\sigma f) \circ d_x(\sigma f) \\
      \vdots \\
      d_{(f \circ (\sigma f)^{\circ m-1})(x)}g \circ d_{(\sigma f)^{\circ m-1}(x)}f \circ d_{(\sigma f)^{\circ m-2}(x)}(\sigma f) \dotsb d_x(\sigma f)\end{pmatrix}.\\
  \end{aligned}
\end{equation*}

\begin{lemma}
  \label{lem:d_sigma_f}
  Consider the restriction of a smooth function $f : X^n \to X$ to the first component of $X^n$ at a given point $x = (x_1, x_2, \dotsc, x_n)$ of $X^n$.
  This is the function
  \begin{equation*}
    \left(f_{X \times \{x_2\} \times \dotsb \times \{x_n\}}\right) : X \to X.
  \end{equation*}
  Under this situation,
  \begin{equation*}
    \rank d_x(\sigma f)=(\dim X)(n-1) + \rank d_{x_1} \left(f_{X \times \{x_2\} \times \dotsb \times \{x_n\}}\right).
  \end{equation*}
\end{lemma}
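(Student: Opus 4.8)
The plan is to reduce the statement to a pointwise linear-algebra computation on the Jacobian of $\sigma f$, exploiting the block structure that Lemma~\ref{lem:sigma_f_block} records in the linear case. Since rank is a local, coordinate-independent invariant of a linear map between tangent spaces, I would first pick charts around $x_1, \dotsc, x_n$ and around the image point $(\sigma f)(x)$; this identifies $T_x X^n$ with $\bigoplus_{i=1}^n T_{x_i} X \cong (\mathbb{R}^{\dim X})^n$, and similarly for the target. In these coordinates the first $n-1$ output blocks of $\sigma f$ are just $(x_2, \dotsc, x_n)$ and the last is $f(x_1, \dotsc, x_n)$, so by the chain rule the Jacobian $d_x(\sigma f)$ is exactly the matrix $S_n + F$ of Lemma~\ref{lem:sigma_f_block}, except that the single nonzero block row of $F$ is now $(A_1, A_2, \dotsc, A_n)$, where $A_j := d_{x_j} f$ is the partial derivative of $f$ in its $j$-th slot. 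Observe that $A_1$ is, by definition, the derivative $d_{x_1}\!\left(f_{X \times \{x_2\} \times \dotsb \times \{x_n\}}\right)$ appearing in the statement.

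Next I would compute the rank of this block matrix directly. Write its block rows as $R_1, \dotsc, R_n$: for $i \le n-1$, $R_i$ has $\id_X$ in block column $i+1$ and zeros elsewhere, while $R_n = (A_1, A_2, \dotsc, A_n)$. Together $R_1, \dotsc, R_{n-1}$ are $(\dim X)(n-1)$ linearly independent rows, and they are supported entirely in block columns $2, \dotsc, n$. Using these identity blocks, perform the block row operation replacing $R_n$ by $R_n - \sum_{j=2}^n A_j R_{j-1}$: since $R_{j-1}$ carries $\id_X$ precisely in block column $j$, this clears $A_2, \dotsc, A_n$ and leaves the last block row equal to $(A_1, 0, \dotsc, 0)$. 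This operation is left multiplication by a unipotent (block lower-triangular, identities on the diagonal) matrix, hence invertible and rank-preserving.

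After the reduction, block rows $R_1, \dotsc, R_{n-1}$ are supported in block columns $2, \dotsc, n$ and contribute rank $(\dim X)(n-1)$, while the last block row is supported in block column $1$ and contributes $\rank A_1$; since these two column ranges are disjoint, the ranks add, giving $\rank d_x(\sigma f) = (\dim X)(n-1) + \rank A_1 = (\dim X)(n-1) + \rank d_{x_1}\!\left(f_{X \times \{x_2\} \times \dotsb \times \{x_n\}}\right)$, as claimed. I do not expect any genuine obstacle here: the content is simply that the ``shift'' part $S_n$ of the Jacobian is already in reduced form and occupies column blocks $2$ through $n$, which are complementary to the single column block into which $f$ contributes. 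The only care required is clerical---keeping the block indices straight and checking that the row operation is by an invertible matrix---together with the standard remark that passing to charts is harmless because rank is a pointwise linear invariant.
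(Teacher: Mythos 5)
Your proof is correct and takes essentially the same approach as the paper: compute the block Jacobian of $\sigma f$ (the shift part $S_n$ plus the row of partial derivatives $A_1,\dotsc,A_n$) and observe that the identity blocks make the rank separate into $(\dim X)(n-1)$ plus $\rank A_1$. The paper's own proof states this conclusion rather tersely after noting the upper block has full rank; your explicit block row reduction (clearing $A_2,\dotsc,A_n$ by an invertible unipotent operation, then noting the complementary column supports) supplies exactly the detail the paper elides, so it is a welcome clarification rather than a different route.
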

\begin{proof}
  As a illustrative shorthand, write
  \begin{equation*}
    \frac{\partial f}{\partial x_k}(x) = d_{x_k} \left(f_{\{x_1\} \times \dotsb X \times \dotsb \times \{x_n\}}\right),
  \end{equation*}
  where the right side is the derivative (Jacobian matrix) of $f$ restricted to the $k$-th component of $X^n$.

  We can proceed by computing $d_x(\sigma f)$ in block matrix form if we assume that $\dim X = d$, which accords with Lemma \ref{lem:sigma_f_block}:
  \begin{equation*}
    \begin{aligned}
      d_x(\sigma f) & = d_{(x_1, x_2, \dotsc, x_n)} (x_2, \dotsc, x_n, f(x_1, x_2, \dotsc, x_n)) \\
      & = \begin{pmatrix}d_{(x_1, x_2, \dotsc, x_n)} x_2 \\  \vdots \\ d_{(x_1, x_2, \dotsc, x_n)} x_n \\ d_{(x_1, x_2, \dotsc, x_n)}f(x_1, x_2, \dotsc, x_n)) \end{pmatrix}\\
      & = \begin{pmatrix} 
        0_{d \times d}      & \id_X  & 0_{d \times d}      & \dotsb & 0_{d \times d} \\
        & \ddots & \ddots & \ddots & \vdots \\
        \vdots &        & \ddots & \ddots & 0_{d \times d} \\
         0_{d \times d }&  \dotsb      &        & 0_{d \times d} & \id_X \\
        \frac{\partial f}{\partial x_1}(x)&\frac{\partial f}{\partial x_2}(x)   &  \dotsb      & & \frac{\partial f}{\partial x_n}(x) \\
        \end{pmatrix}.\\
    \end{aligned}
    \end{equation*}
    The upper portion of the above matrix is of full rank, namely $(\dim X)(n-1)$, as the rows are all linearly independent.
    Therefore, the rank of $d_x(\sigma f)$ depends entirely upon the rank of $\frac{\partial f}{\partial x_1}(x)$.
\end{proof}

\begin{lemma}
  \label{lem:autoregressive_immersion}
  Suppose $x_1, \dotsc x_{n-1}$ are elements of $X$.  If $m \ge 2$, there is a residual subset $U$ of $C^\infty(X^n,X)$, such that if $f \in U$ then $\mathcal{A}_m(f,\id_X)|_{\{x_1\} \times \dotsb \times \{x_{n-1}\} \times X} : X \to X^m$ is a smooth immersion.
\end{lemma}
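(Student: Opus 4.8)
The plan is to reduce to the case $m=2$ and then run a jet-transversality argument whose crux is an ``ampleness'' fact: perturbations of $f$ already move the $1$-jet of the restricted map in all the directions that matter. First, it suffices to treat $m=2$: the first two components of $\mathcal{A}_m(f,\id_X)$ are exactly those of $\mathcal{A}_2(f,\id_X)$, so the Jacobian of $\mathcal{A}_m(f,\id_X)$ restricted to $\{x_1\}\times\dotsb\times\{x_{n-1}\}\times X$ has the Jacobian of the restricted $\mathcal{A}_2(f,\id_X)$ as a block of rows, and full column rank of the latter forces full column rank of the former; thus one residual $U$ will serve every $m\ge2$. (Here $n\ge2$, the relevant case; for $n=1$ the shift reduces to $f$ itself, the Jacobian of every component then has kernel containing $\ker d_vf$, and the statement would require $f$ to be an immersion, which is not generic.) Writing $v$ for the free coordinate, $a(v):=(x_1,\dotsc,x_{n-1},v)$, $F_1(v):=f(a(v))$ and $b(v):=(x_2,\dotsc,x_{n-1},v,F_1(v))$, the chain rule gives, for the restricted map $v\mapsto(F_1(v),F_2(v))$, a Jacobian with rows
\begin{equation*}
  d_vF_1=\tfrac{\partial f}{\partial x_n}(a(v)),\qquad
  d_vF_2=\tfrac{\partial f}{\partial x_{n-1}}(b(v))+\tfrac{\partial f}{\partial x_n}(b(v))\circ d_vF_1 .
\end{equation*}
Being an immersion at $v$ means this $2d\times d$ matrix ($d:=\dim X$) has rank $d$; the non-injective $1$-jets form a subset $\Sigma\subseteq J^1(X,X^2)$, a finite union of submanifolds each of codimension at least $(2d-(d-1))(d-(d-1))=d+1$, cut out fibrewise over the source--target projection $J^1(X,X^2)\to X\times X^2$.

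Next I would consider the evaluation map $\Phi\colon C^\infty(X^n,X)\times X\to J^1(X,X^2)$ sending $(f,v)$ to the $1$-jet at $v$ of the restricted $\mathcal{A}_2(f,\id_X)$, and show $\Phi\pitchfork\Sigma$ by checking that the $f$-derivative of $\Phi$ alone already spans the fibre directions of $J^1(X,X^2)\to X\times X^2$. From the formula above: a bump-function perturbation of $f$ supported near $a(v)$ and vanishing there (so $F_1(v)$, hence $b(v)$, is unchanged) prescribes an arbitrary change $\delta A$ in $d_vF_1$, which propagates into $d_vF_2$ only as $\partial f/\partial x_n(b(v))\circ\delta A$; a bump supported near $b(v)$ and vanishing there prescribes an arbitrary change $\delta L$ added directly to $d_vF_2$. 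Since $\delta L$ is unconstrained, the pair $(d_vF_1,d_vF_2)$ can be moved in every direction of $\operatorname{Hom}(\mathbb{R}^d,\mathbb{R}^{2d})$, so $\Phi\pitchfork\Sigma$ (the normal directions of $\Sigma$ lie exactly in these fibre directions). By the parametric form of Thom's transversality theorem \cite{Golubitsky_1973} --- exhausting $X$ by compacta and using that ``immersion on a compact set'' is an open condition on $f$ --- there is a residual $U\subseteq C^\infty(X^n,X)$ such that for $f\in U$ the map $v\mapsto j^1_v$ of the restricted $\mathcal{A}_2(f,\id_X)$ is transverse to $\Sigma$; since that codimension, $\ge d+1$, exceeds $\dim X=d$, transversality forces disjointness from $\Sigma$, i.e.\ the restricted $\mathcal{A}_2(f,\id_X)$ --- hence the restricted $\mathcal{A}_m(f,\id_X)$ --- is an immersion.

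The step I expect to be the main obstacle is making this ``ampleness'' rigorous, because the second component's base point $b(v)$ is itself $f$-dependent: one must perturb $\partial f/\partial x_{n-1}$ along the moving locus $\{b(v)\}$ without disturbing $F_1$ along the fixed locus $\{a(v)\}$, and separately treat the at-most-one value of $v$ with $a(v)=b(v)$ --- there $\Phi$ is not a submersion onto all of $J^1(X,X^2)$, but the $1$-jet conditions imposed on the perturbation at that point remain mutually compatible, so $\Phi\pitchfork\Sigma$ still holds. The remaining ingredients are routine: the reduction to $m=2$, the codimension count, and the globalization over non-compact $X$ --- including the standard care needed to apply the transversality theorem in the Whitney $C^\infty$ topology on $C^\infty(X^n,X)$, e.g.\ by reducing to finite-dimensional families of bump perturbations over each compact piece of $X$ and invoking Sard's theorem.
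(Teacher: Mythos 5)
Your proof is correct and rests on the same core strategy as the paper's --- reduce to $m=2$, compute the restricted Jacobian as the stacked pair $\bigl(\partial f/\partial x_n(a(v)),\ \partial f/\partial x_{n-1}(b(v))\bigr)$ up to a full-rank factor, note the rank-deficient locus has codimension at least $d+1 > d = \dim X$, and conclude avoidance by transversality --- but it implements the transversality step differently and, I think, more carefully. The paper applies Thom's jet transversality theorem to $f$ viewed on its last two slots, citing avoidance of a rank-$<d$ submanifold of a one-point jet bundle. The subtlety, which you identify precisely, is that the two blocks of the stacked Jacobian are evaluated at two distinct points $a(v)$ and $b(v)=(\sigma f)(a(v))$, and the second depends on $f$ itself; this is not a one-point $1$-jet condition on $f$, so the cited theorem cannot be applied verbatim and something more is needed. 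You replace that step with the evaluation-map form of transversality on the composed map: $\Phi(f,v)$ is the $1$-jet at $v$ of the restricted $\mathcal{A}_2(f,\id_X)$, and you verify ampleness by exhibiting bump perturbations of $f$ near $a(v)$ and near $b(v)$ that vanish at those points (so $F_1(v)$, and hence $b(v)$, are unchanged) and together move $(d_vF_1,d_vF_2)$ through the full space of $2d\times d$ matrices, so that parametric transversality applies. The payoff of your route is that the $f$-dependence of $b(v)$ is absorbed into the object being perturbed rather than tacitly ignored, and the possible coincidence $a(v)=b(v)$ is handled explicitly: the two bump supports can no longer be disjoint, but the $(n-1)$-th and $n$-th columns of a single perturbation's Jacobian vary independently, so ampleness survives. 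Your observation that the $n=1$ case fails is also correct; the paper's proof silently uses $n\ge 2$.
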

\begin{proof}
  It suffices to show the statement for $m=2$.
  We compute the derivative of $\mathcal{A}_2(f,\id_X)$ completely, using the notation from Lemma \ref{lem:d_sigma_f}.
  To that end, we start with computing
  \begin{equation*}
    \begin{aligned}
      d_{(\sigma f)(x)}f \circ d_x(\sigma f) &= \begin{pmatrix}\frac{\partial f}{\partial x_1}\left((\sigma f)(x)\right)& \dotsb & \frac{\partial f}{\partial x_n}\left((\sigma f)(x)\right)\end{pmatrix}
      \left(
      \begin{smallmatrix} 
        0_{d \times d}      & \id_X  & 0_{d \times d}      & \dotsb & 0_{d \times d} \\
        & \ddots & \ddots & \ddots & \vdots \\
        \vdots &        & \ddots & \ddots & 0_{d \times d} \\
         0_{d \times d }&  \dotsb      &        & 0_{d \times d} & \id_X \\
        \frac{\partial f}{\partial x_1}(x)&\frac{\partial f}{\partial x_2}(x)   &  \dotsb      & & \frac{\partial f}{\partial x_n}(x) \\
      \end{smallmatrix} \right)\\
      &= \left(\begin{smallmatrix}
        \frac{\partial f}{\partial x_n}\left((\sigma f)(x)\right)\frac{\partial f}{\partial x_1}(x)&  \dotsb      & \frac{\partial f}{\partial x_n}\left((\sigma f)(x)\right)\frac{\partial f}{\partial x_{n-1}}(x) & \frac{\partial f}{\partial x_{n-1}}\left((\sigma f)(x)\right) + \frac{\partial f}{\partial x_n}\left((\sigma f)(x)\right)\frac{\partial f}{\partial x_n}(x)
        \end{smallmatrix}\right)
    \end{aligned}
  \end{equation*}

  Given the above calculation, we can can compute  
  \begin{equation*}
    \begin{aligned}
    d_x \left(\mathcal{A}_2(f,\id_X)\right)(x) &= \begin{pmatrix} d_xf \\
      d_{(\sigma f)(x)}f \circ d_x(\sigma f) \\
    \end{pmatrix} \\
    &= \left(\begin{smallmatrix}
      \frac{\partial f}{\partial x_1}(x) & \dotsb & \frac{\partial f}{\partial x_{n-1}}(x) & \frac{\partial f}{\partial x_n}(x)\\
      \frac{\partial f}{\partial x_n}\left((\sigma f)(x)\right)\frac{\partial f}{\partial x_1}(x)&  \dotsb      & \frac{\partial f}{\partial x_n}\left((\sigma f)(x)\right)\frac{\partial f}{\partial x_{n-1}}(x) & \frac{\partial f}{\partial x_{n-1}}\left((\sigma f)(x)\right) + \frac{\partial f}{\partial x_n}\left((\sigma f)(x)\right)\frac{\partial f}{\partial x_n}(x)\\
      \end{smallmatrix}\right)\\
    \end{aligned}
  \end{equation*}
  
  Recognize that being a smooth immersion means that the rank of the derivative of $\mathcal{A}_m(f,\id_X)|_{\{x_1\} \times \dotsb \times \{x_{n-1}\} \times X}$ at every point must be nonsingular.
  This means we really only need to consider the last column of the above, so that
  \begin{equation*}
    \begin{aligned}
    d\left(\mathcal{A}_m(f,\id_X)|_{\{x_1\} \times \dotsb \times \{x_{n-1}\} \times X}\right)(x_n) &= \begin{pmatrix}
      \frac{\partial f}{\partial x_n}(x)\\
      \frac{\partial f}{\partial x_{n-1}}\left((\sigma f)(x)\right) + \frac{\partial f}{\partial x_n}\left((\sigma f)(x)\right)\frac{\partial f}{\partial x_n}(x)\\
    \end{pmatrix}\\
    &= \begin{pmatrix} \id_x & 0_{d \times d} \\\frac{\partial f}{\partial x_n}\left((\sigma f)(x)\right)  & \id_X \end{pmatrix} \begin{pmatrix} \frac{\partial f}{\partial x_n}(x) \\ \frac{\partial f}{\partial x_{n-1}}((\sigma f)(x)) \end{pmatrix}
    \end{aligned}
  \end{equation*}
  Observe that the notation $\frac{\partial f}{\partial x_{n-1}}$ means the Jacobian matrix of partial derivatives in the $(n-1)$-th component, which is \emph{not necessarily} the original value of $x_{n-1}$.  Given this caution, 
  \begin{equation*}
    \begin{aligned}
      \frac{\partial f}{\partial x_{n-1}}\left((\sigma f)(x)\right) &= \frac{\partial }{\partial x_{n-1}}\left( f(x_2, \dotsc, x_n, f(x_1, x_2, \dotsc, x_n))  \right) \\
      &= d_{(x_2, \dotsc, x_n, f(x_1, x_2, \dotsc, x_n))} f|_{\{x_2\} \times \dotsc \{x_{n-1}\} \times X \times \{f(x)\}} \\
      &=d_{(\sigma f)(x)} f|_{\{x_2\} \times \dotsc \{x_{n-1}\} \times X \times \{f(x)\}} \\
    \end{aligned}
  \end{equation*}
  In short, the conditions imposed by the upper block of
  \begin{equation*}
    d\left(\mathcal{A}_m(f,\id_X)|_{\{x_1\} \times \dotsb \times \{x_{n-1}\} \times X}\right)(x_n)
  \end{equation*}
  and the lower block apply to two different restrictions of $f$ to subspaces.
  
  The above matrix is of size $2\dim X \times \dim X$.
  To obtain an immersion, the rank of this matrix must be $\dim X$.
  According to \cite[Lemma 5.1]{Golubitsky_1973} and the above calculation, we will have an immersion if the derivative of $f$ restricted to $\{x_1\} \times \dotsb \times \{x_{n-2}\} \times X \times X$ does not intersect the subspace $K$ of matrices with rank strictly less than $\dim X$ in each fiber.
  According to \cite[Prop 5.3]{Golubitsky_1973}, this subspace of $2 \dim X \times \dim X$ matrices of rank $\dim X - r$ for $r \ge 1$ is a submanifold of codimension
  \begin{equation*}
    (2 \dim X - \dim X + r)(\dim X - \dim X + r) = (\dim X +r) r \ge \dim X + 1.
  \end{equation*}
     However, Thom transversality \cite[Thm 4.9]{Golubitsky_1973} yields a residual subset $U$ of $C^\infty(X^2,X)$ such that if $f \in U$, then the derivative of $f$ (restricted to the last two coordinates) is transverse to $K$.
   According to \cite[Thm 4.4]{Golubitsky_1973}, this means that for $f \in U$, $f^{-1}(K)$ is of codimension at least $\dim X+1$,
   which finally, according to \cite[Prop 4.3]{Golubitsky_1973} means that when we restrict $f$ to its last coordinate, its image will not intersect $K$.
\end{proof}

If we acquire another sample, we can ascribe injectivity to our immersion.

\begin{lemma}
  \label{lem:autoregressive_embedding_3}
  Suppose $x_1, \dotsc x_{n-1}$ are elements of $X$.  There is a residual subset $V$ of $C^\infty(X^n,X)$, such that if $f \in V$ then
  \begin{equation*}
    \mathcal{A}_m(f,\id_X)|_{\{x_1\} \times \dotsb \times \{x_{n-1}\} \times X} : X \to X^m
  \end{equation*}
  is a smooth injective immersion whenever $m \ge 3$.
\end{lemma}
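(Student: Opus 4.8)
The plan is to write the desired residual set as $V := U_{\mathrm{imm}} \cap U_{\mathrm{inj}}$, an intersection of two residual sets, hence residual. The first is handed to us: applying Lemma \ref{lem:autoregressive_immersion} with the present $m \ge 3$ (so in particular $m \ge 2$) yields a residual $U_{\mathrm{imm}} \subseteq C^\infty(X^n,X)$ on which $\mathcal{A}_m(f,\id_X)|_{\{x_1\}\times\dotsb\times\{x_{n-1}\}\times X}$ is an immersion, so it remains only to make it injective on a residual $U_{\mathrm{inj}}$. For this I would run the classical ``transversality to the diagonal'' argument. Abbreviating $\iota(a) := (x_1,\dotsc,x_{n-1},a)$ and $\Phi_f(a) := \mathcal{A}_m(f,\id_X)(\iota(a))$, form on the open $(2\dim X)$-dimensional manifold $(X\times X)\setminus\Delta_X$ the parametrized pair-evaluation
\begin{equation*}
\mathrm{ev} : \bigl((X\times X)\setminus\Delta_X\bigr)\times C^\infty(X^n,X)\longrightarrow X^m\times X^m,\qquad \mathrm{ev}(a,b,f) := \bigl(\Phi_f(a),\Phi_f(b)\bigr).
\end{equation*}
For fixed $f$, $\Phi_f$ is injective exactly when $\mathrm{ev}(\,\cdot\,,\,\cdot\,,f)$ avoids the diagonal $\Delta_{X^m}\subseteq X^m\times X^m$, a submanifold of codimension $m\dim X$. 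If $\mathrm{ev}$ is transverse to $\Delta_{X^m}$, then the parametric transversality theorems of \cite{Golubitsky_1973} --- the same machinery already used in Lemma \ref{lem:autoregressive_immersion} --- give a residual $U_{\mathrm{inj}}\subseteq C^\infty(X^n,X)$ with $\mathrm{ev}(\,\cdot\,,\,\cdot\,,f)$ transverse to $\Delta_{X^m}$ for $f\in U_{\mathrm{inj}}$; since $2\dim X < m\dim X$ whenever $m\ge 3$ (and $\dim X\ge 1$), transversality then forces $\mathrm{ev}(\,\cdot\,,\,\cdot\,,f)^{-1}(\Delta_{X^m}) = \emptyset$, i.e.\ $\Phi_f$ is injective.

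The hard part --- and where I expect the only genuine obstacle to lie --- is verifying that $\mathrm{ev}$ is transverse to $\Delta_{X^m}$. At a point $(a,b,f)$ with $\mathrm{ev}(a,b,f)\in\Delta_{X^m}$, i.e.\ where the two length-$m$ response sequences coincide, one must show that varying $f$ already sweeps out the full normal direction $\cong X^m$ to $\Delta_{X^m}$. Here the structure of the shift is the lever: because $a\ne b$ while the prefix $x_1,\dotsc,x_{n-1}$ is shared, the orbits $w_k := (\sigma f)^{\circ k}(\iota(a))$ and $w_k' := (\sigma f)^{\circ k}(\iota(b))$ satisfy $w_k\ne w_k'$ for every $k$ before the varying coordinate is shifted out of the window (cf.\ the coordinate bookkeeping in the proof of Proposition \ref{prop:bijective_shifts}), so the early iterates of the two orbits are genuinely distinct points of $X^n$. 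Differentiating $\mathcal{A}_m$ in the $f$-direction along a perturbation $\eta$, using the repeated-chain-rule expression for $d\mathcal{A}_m(f,g)$ displayed above, exhibits a \emph{triangular} dependence: the variation of the $k$-th response coordinate equals $\eta(w_k)$ plus a linear expression in $\eta(w_0),\dotsc,\eta(w_{k-1})$, and symmetrically for the primed orbit. Choosing $\eta$ supported in small pairwise-disjoint balls around the orbit points and solving coordinate by coordinate for $k = 0,1,\dotsc,m-1$ then produces an arbitrary prescribed difference vector, which is the surjectivity required for transversality. The delicate point is coincidences among orbit points (orbits that merge, or are eventually periodic): on the coordinates where two relevant orbit points coincide the corresponding response values are automatically equal and impose no condition, so one runs the triangular solve over the reduced collection of distinct orbit points, and the distinctness of early iterates secured above keeps that collection rich enough to span the normal space of $\Delta_{X^m}$.

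To make the transversality argument rigorous one organizes it over a countable compact exhaustion: cover $(X\times X)\setminus\Delta_X$ --- or, invoking Lemma \ref{lem:autoregressive_immersion} to discard a neighborhood of $\Delta_X$ on which $\Phi_f$ is already locally injective, just the complementary closed set --- by compact pieces $K_i$; on each $K_i$ the condition ``$\Phi_f$ separates every pair in $K_i$'' is open in $f$ by compactness and dense by the transversality above (as $\dim K_i \le 2\dim X < m\dim X$), so $U_{\mathrm{inj}} := \bigcap_i \{f : \Phi_f \text{ separates } K_i\}$ is residual. Then $V := U_{\mathrm{imm}}\cap U_{\mathrm{inj}}$ is residual, and for every $f\in V$ the restriction $\mathcal{A}_m(f,\id_X)|_{\{x_1\}\times\dotsb\times\{x_{n-1}\}\times X}$ is a smooth injective immersion, for all $m\ge 3$.
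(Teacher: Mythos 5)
You take the same two-part decomposition as the paper --- immersion from Lemma \ref{lem:autoregressive_immersion}, injectivity via transversality to a diagonal plus a dimension count --- and you correctly identify the crux: verifying transversality in the $f$-direction of the parametrized pair-evaluation. The ``triangular'' structure of the $\eta$-derivative of the response along the orbit is the right observation, and it mirrors what the paper is implicitly exploiting. However, the implementation has two genuine gaps, and it is also worth noting where your route is actually cleaner than the paper's.

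First gap: the invocation of ``parametric transversality'' is not available off the shelf here. The parameter space $C^\infty(X^n,X)$ in the Whitney topology is Fr\'echet, not finite-dimensional or Banach, so the classical parametric transversality/transversal density theorems do not apply directly. The tools the paper uses (Thom transversality \cite[Thm 4.9]{Golubitsky_1973} and multijet transversality \cite[Thm 4.13]{Golubitsky_1973}) are \emph{jet} transversality theorems, which require the offending condition to be expressed as a condition on jets of the map. Your condition --- coincidence of $m$ iterates of an autoregressive orbit --- is not obviously a jet condition in $f$, because it evaluates $f$ along a chain of $f$-dependent points. The paper's trick of packaging the iterates into a single map $\Phi : X^3 \to X^3$ is precisely the maneuver that turns the injectivity condition into a $0$-jet condition on $\Phi$, to which multijet transversality then applies. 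Nothing in your proof performs that conversion, so the appeal to transversality machinery is currently unsupported.

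Second gap: you keep $m$ general rather than reducing to $m=3$ as the paper does (and as it remarks ``it suffices''). This matters: once $m > n$, the two orbits $(\sigma f)^{\circ k}(\iota(a))$ and $(\sigma f)^{\circ k}(\iota(b))$ can merge (the last trace of $a$ versus $b$ is shifted out of the context window), after which the later response differences carry no independent perturbative freedom and the ``triangular solve'' collapses. Your sentence that ``the distinctness of early iterates keeps that collection rich enough to span the normal space of $\Delta_{X^m}$'' is precisely the point where the argument would fail for $m$ large relative to $n$; you do not actually show the early iterates give $m\dim X$ independent directions. The fix is the paper's: prove it for $m=3$ (where $w_0,w_1,w_2$ and $w_0',w_1',w_2'$ retain the $a$ vs.\ $b$ distinction provided $n\ge 3$) and note that appending further coordinates to an injective immersion keeps it an injective immersion.

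On the other hand, your approach of perturbing $f$ directly avoids a soft spot in the paper's own proof: the last step, which passes from a residual subset $U' \subseteq C^\infty(X^3,X^3)$ of $\Phi$'s to a residual subset of $f$'s by ``retaining only the first coordinate of the codomain,'' is not obviously correct, since the assignment $F \mapsto \Phi$ is a highly nonlinear map whose image is a thin subset of $C^\infty(X^3,X^3)$, and a residual set in the big space need not pull back to a residual set under it. If you carry out the reduction to $m=3$ and then recast your perturbation calculation in jet-bundle language, your direct-in-$f$ route could be made rigorous and would be a genuine improvement on this point.
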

\begin{proof}
  It suffices to establish the result for $m=3$.
  First of all, since $m = 2 < 3$, Lemma \ref{lem:autoregressive_immersion} yields a residual $U$ such that if $f \in U$, then
  \begin{equation*}
    \mathcal{A}_3(f,\id_X)|_{\{x_1\} \times \dotsb \times \{x_{n-1}\} \times X} : X \to X^m
  \end{equation*}
  is a smooth immersion.  

  We now handle injectivity.
  For convenience, define $F: X^3 \to X^3$ by
  \begin{equation*}
    F(x,y,z) := f(x_1, \dotsc, x_{n-3},x, y, z).
  \end{equation*}
  With this definition, we have
  \begin{equation*}
    \left(\mathcal{A}_3(f,\id_X)\right)(z) = \left(\begin{aligned}&F(x_{n-2},x_{n-1},z),\\ &F(x_{n-1},z, F(x_{n-2},x_{n-1},z)),\\ &F(z,F(x_{n-2},x_{n-1},z),F(x_{n-1},x, F(x_{n-2},x_{n-1},z)))\end{aligned}\right).
  \end{equation*}
  Mirroring the above formula, define $\Phi: X^3 \to X^3$ by
  \begin{equation*}
    \Phi(x,y,z) := \left(\begin{aligned}&F(x,y,z),\\ &F(y,z, F(x,y,z)),\\ &F(z,F(x,y,z),F(x,y, F(x,y,z)))\end{aligned}\right).
  \end{equation*}
  
  The Lemma is established if $\Phi$ is injective upon restricting it to its last coordinate, namely $z$, and taking $x = x_{n-2}$ and $y= x_{n-1}$ as specified in the hypothesis.
  To that end, we follow the plan laid out in \cite[Thm 5.7]{Golubitsky_1973}.
  Define $\Delta (X^3)$ to be the set of tuples of the form $(x,y,z,x,y,z) \in X^6$, and let
  \begin{equation*}
    \Phi_2 : X^3 \times X^3 \backslash \Delta(X^3) \to X^3 \times X^3 \times X^3 \times X^3
  \end{equation*}
  be given by
  \begin{equation*}
    \Phi_2(x,y,z,x',y',z') := (x,y,z,x',y',z',\Phi(x,y,z),\Phi(x',y',z')).
  \end{equation*}
  
  $F$ being injective on its last coordinate (and therefore also $\mathcal{A}_3(f,\id_X)$) is equivalent to the statement that the image $\Phi_2(x,y,X)$ does not intersect the submanifold $W= (\{x\}\times \{y\} \times X)^2 \times \Delta(X^3)$, again for the $x = x_{n-2}$ and $y= x_{n-1}$ specified in the hypothesis.

  According to \cite[Thm 4.13]{Golubitsky_1973}, there is a residual subset $U' \subseteq C^\infty(X^3,X^3)$ such that if $\Phi \in U'$, then $\Phi_2$ is transverse to $W$.
  The submanifold $W$ is of codimension
  \begin{equation*}
    2 (\dim X^2 ) + \dim X^3 = 7 \dim X,
  \end{equation*}
  so the codimension of $\Phi_2^{-1}(W) \subset X^6$ is also $7 \dim X$.
  Since this codimension is strictly greater than the dimension of the domain of $\Phi_2$, namely $6 \dim X$, by \cite[Lemma 5.1]{Golubitsky_1973} transversality is equivalent to not intersecting $W$.

  To complete the argument, recognize that the residual subset $U' \subseteq C^\infty(X^3,X^3)$ restricts to a residual subset $U'' \subseteq C^\infty(X^3,X)$ by retaining only the first coordinate of the codomain.  In this way, if $\Phi \in U'$, we obtain $F \in U''$, and hence $f$ with the desired property.  We therefore define $V := U \cap U''$, to obtain the residual set of $f$ such that $\mathcal{A}_3(f,\id_X)$ is an injective immersion.
\end{proof}

Finally, we can address the proof of the main theorem.

\begin{proof}(of Theorem \ref{thm:autoregressive_embedding})
  Without loss of generality, we assume that $\dim Y \le \dim X$.
  This means that
  \begin{equation*}
    \rank dg \le \dim Y.
  \end{equation*}
  Notice that if $\dim Y > \dim X$, then the maximum of rank $dg$ is $\dim X$,
  which then requires edits to the proof below \emph{mutatis mutandis}.
  
  The main point, as in Lemma \ref{lem:autoregressive_immersion} is a calculation of the derivative of $\mathcal{A}_m(f,\id_X)$ in the last coordinate.
  Observe that this only adds more (admittedly complicated) blocks to our matrices.
  After some work, we find that
  \begin{equation*}
    d \left(\mathcal{A}_m(f,\id_X)|_{\{x_1\} \times \dotsb \times \{x_{n-1}\} \times X}\right)(x_n) = M \begin{pmatrix} \frac{\partial f}{\partial x_n}(x) \\ \frac{\partial f}{\partial x_{n-1}}((\sigma f)(x)) \\ \vdots \\ \frac{\partial f}{\partial x_{n-m+1}}((\sigma f)^{\circ (m-1)}(x))  \end{pmatrix},
  \end{equation*}
  where $M$ is a full rank matrix.
  From this, it follows that
  \begin{equation*}
    \begin{aligned}
      d (\mathcal{A}_m(f,g)&|_{\{x_1\} \times \dotsb \times \{x_{n-1}\} \times X})(x_n) = \\
      &\left(\begin{smallmatrix}
    d_{f(x)} g \\
    & d_{(f\circ (\sigma f))(x)} g \\
    && \ddots \\
    &&& d_{(f \circ (\sigma f)^{\circ m-1}(x)}g\\
    \end{smallmatrix}\right) M \left(\begin{smallmatrix} \frac{\partial f}{\partial x_n}(x) \\ \frac{\partial f}{\partial x_{n-1}}((\sigma f)(x)) \\ \vdots \\ \frac{\partial f}{\partial x_{n-m+1}}((\sigma f)^{\circ (m-1)}(x))  \end{smallmatrix}\right),
    \end{aligned}
  \end{equation*}
  which is evidently full rank provided each of the diagonal $d_*g$ blocks are full rank.
  This is ensured by the classical Sard's theorem on a residual subset of functions in $C^\infty(X,Y)$.
  We can more usefully express this fact by recognizing that each $d_*g$ block must be of rank $\min\{\dim X,\dim Y\}$,
  which yields a manifold of codimension $0$ within the $1$-jet bundle $J^1(X,Y)$.
  The Thom transvserality theorem \cite[Thm 4.9]{Golubitsky_1973} asserts that on a residual subset of $V'$ of $C^\infty(X,Y)$, if $g \in V'$ the map taking $x$ to $(x,d_xg)$ (namely the $1$-jet $j^1g$) will be transverse to the submanifold of codimension $0$ expressing that each block is full rank.
  Therefore the preimage of this submanifold will also have codimension $0$ in $X^m$.
  Briefly, $d_*g$ will be of full rank except at isolated points for $g$ in $V'$.
  Let $K$ the $0$-dimensional submanifold containing these points,
  which we will handle later.
  
  Now to establish injectivity. 
  As in Lemma \ref{lem:autoregressive_embedding_3}, we follow the proof of \cite[Thm 5.7]{Golubitsky_1973}.  
  Multijet transversality \cite[Thm 4.13]{Golubitsky_1973} states that for a residual set $V$ of $C^\infty(X,Y)$, if $g \in V$, then the function $G_2$ defined by
  \begin{equation*}
    G_2(u_1, \dotsc, u_m, v_1 \dotsc, v_m) := \left(u_1, v_1, g(u_1), g(v_1), \dotsc, u_m, v_m, g(u_m), g(v_m)\right)
  \end{equation*}
  is transverse to a submanifold $L := (Y^3)^m$, that identifies self-intersections.
  $L$ is of codimension $m \dim Y$.
  Therefore, the codimension of the preimage of $L$ is $m \dim Y$.
  (Note that the codimension of $L$ is not greater than the dimension of the domain of $G_2$, namely $m \dim X$, so we do not expect injectivity of $g \times \dotsb \times g$.)

  Notice that the hypotheses automatically ensure that $m \ge 3$, so Lemma \ref{lem:autoregressive_embedding_3} applies to give the residual $U'$ such that $f \in U'$ then
  \begin{equation*}
    \mathcal{A}_m(f,\id_X)|_{\{x_1\} \times \dotsb \times \{x_{n-1}\} \times X} : X \to X^m
  \end{equation*}
  is an injective immersion.
   On the other hand, in the proof of Lemma \ref{lem:autoregressive_embedding_3}, the function $\Phi_2$ is transverse to the preimage $G_2^{-1}(L)$ for $f$ in a residual subset $U'' \subseteq U'$ of $C^\infty(X^n,X)$.
     Finally, since the image of $\mathcal{A}_m(f,\id_X)_{\{x_1\} \times \dotsb \times \{x_{n-1}\} \times X}$ is of dimension $d$, hence codimension $(m-1)d$, this will not intersect any of the isolated points $K$ for any $f$ in a (somewhat smaller) residual subset $U \subset U'' \subset C^\infty(X^n,X)$.
  Therefore, for such a choice of $f$, $\mathcal{A}_m(f,g)_{\{x_1\} \times \dotsb \times \{x_{n-1}\} \times X}$ is an immersion.
 
  Therefore, the preimage $\Phi_2^{-1}(G_2^{-1}(L))$ is of codimension $m \dim Y$ in $X^2$.  According to the hypotheses of the Theorem, we assume that
  \begin{equation*}
    2 \dim Z = 2d < m \ell = m \dim Y,
  \end{equation*}
  so by \cite[Lemma 5.1]{Golubitsky_1973}, the composition that defines $\mathcal{A}_m(f,g)$ is an injective map when restricted to the submanifold $Z$.
\end{proof}

\end{document}